\documentclass[11pt]{amsart}
\usepackage{amssymb, amsbsy}
\usepackage{amsfonts, amscd}
\usepackage{mathtools}
\usepackage{geometry}
\numberwithin{equation}{section}
\usepackage{booktabs}

\usepackage[usenames,dvipsnames]{xcolor}
\usepackage{hyperref}
\hypersetup{
    colorlinks=true,
    citecolor=red,
    linkcolor=blue,
    filecolor=magenta,
    urlcolor=red,
}

\newcommand{\C}{\mathbb{C}}
\newcommand{\Q}{\mathbb{Q}}
\newcommand{\Z}{\mathbb{Z}}

\newcommand{\OO}{\mathcal{O}}

\newtheorem{thm}{Theorem}[section]

\newtheorem{lem}[thm]{Lemma}
\newtheorem{prop}[thm]{Proposition}

\theoremstyle{definition}
\newtheorem{defn}[thm]{Definition}

\newtheorem{rem}[thm]{Remark}

\title[Vector Field and weighted homogeneity]{Criteria of  isolated weighted homogeneous hypersurface singularities using Logarithmic vector fields}
\author{Jihao Liu}
\address{Department of Mathematics, Peking University, No. 5 Yiheyuan Road, Haidian District, Beijing 100871, China}
\address{Beijing International Center for Mathematical Research, Peking University, No. 5 Yiheyuan Road, Haidian District, Beijing 100871, China}
\email{liujihao@math.pku.edu.cn}

\author{Xiping Zhang}
\address{School of Mathematical Sciences and Key Laboratory of Intelligent Computing and Applications (Ministry of Education), Tongji University, Shanghai 200092, China}
\email{xzhmath@gmail.com}

\subjclass[2020]{Primary 32S05, 32S25; Secondary 14B05, 32S65, 14J17}
\keywords{isolated hypersurface singularity, weighted homogeneous, logarithmic vector field, link, transversality, quasi-homogeneity}
\date{\today}

\begin{document}

\begin{abstract}
We prove a conjecture of da Silva Machado and Seade that characterizes weighted homogeneous isolated hypersurface singularities through the existence of a logarithmic vector field transverse to the link. For a reduced isolated hypersurface germ $(D,0)$ in $\C^{n+1}$ with $n\ge2$, or with $n=1$ and $D$ irreducible, we prove that weighted homogeneity is equivalent to the existence, in suitable coordinates, of a logarithmic vector field everywhere transverse in the real-Euclidean sense to all small links. We also prove the equivalent formulation that $(D,0)$ admits an ambient holomorphic vector field tangent to $D$ that has a non-degenerate isolated singularity at $0$. We further show that the transversality condition must be read after allowing a coordinate change: there exists a weighted homogeneous germ admitting no logarithmic field transverse to the standard round links in certain linear coordinates. The main result  of this paper was obtained by the Rethlas system.
\end{abstract}

\maketitle

\tableofcontents

\section{Introduction}\label{sec:intro}

Let $(D,0)\subset(\C^{n+1},0)$ be an isolated hypersurface singularity. da Silva Machado and Seade conjectured in \cite{MS26} that weighted homogeneity of $(D,0)$ is characterized by the existence of a holomorphic vector field tangent to $D$ and transverse to all sufficiently small links of $D$; some partial cases were obtained there. The transversality condition in this paper is the real-Euclidean one: the real part of the vector field has nonzero radial component along the link.

Our main result establishes this characterization in its coordinate-matched form, in which the Euclidean radial condition is evaluated in the same coordinates that exhibit weighted homogeneity.

\begin{thm}[Main theorem]\label{thm:main}
Let $N=n+1$, let $f\in\C\{x_0,\dots,x_n\}$ be reduced, assume that $f\in\mathfrak m^2$, and let $(D,0)=\{f=0\}\subset(\C^N,0)$ be an isolated hypersurface singularity. Assume either $n\ge2$, or $n=1$ and $D$ is irreducible. Then the following are equivalent:
\begin{enumerate}
\item $(D,0)$ is weighted homogeneous after a biholomorphic change of coordinates.
\item After a biholomorphic change of coordinates, there exists a holomorphic logarithmic vector field germ
\[
v=\sum_{i=0}^{n}\nu_i\frac{\partial}{\partial x_i}\in\operatorname{Der}(-\log D)
\]
such that
\[
2\operatorname{Re}\left(\sum_{i=0}^{n}\nu_i(P)\overline{P_i}\right)\neq0
\]
for every smooth point $P$ of $D$ on every sufficiently small Euclidean link, where the Euclidean condition is computed in those same coordinates. In other words, the vector field $\nu$ is transverse to all the link spheres.

\item There exists a holomorphic ambient vector field germ tangent to $D$
that has a non-degenerate isolated singularity at $0$, i.e.,  its linear part evaluated at $0$ is invertible.
\end{enumerate}
\end{thm}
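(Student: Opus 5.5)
The plan is to prove (1)$\Rightarrow$(2)$\Rightarrow$(3)$\Rightarrow$(1). The last implication is the substantial one; it will rest on Saito's theorem ($f\in J(f)$ if and only if $f$ is weighted homogeneous in suitable coordinates) together with a Poincaré--Dulac normal form argument.

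\emph{(1)$\Rightarrow$(2).} Choose coordinates in which $f$ is weighted homogeneous with positive rational weights $w_i$ and degree $d$. Take the Euler field $v=\sum_i w_i x_i\,\partial/\partial x_i$. Then $v(f)=d\,f$, so $v\in\operatorname{Der}(-\log D)$. For the transversality, note that
\[
2\operatorname{Re}\sum_i \nu_i(P)\overline{P_i}=2\sum_i w_i|P_i|^2>0
\]
for every $P\neq0$. Hence $v$ is transverse to every round sphere, and in particular to every link, in the same coordinates.

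\emph{(2)$\Rightarrow$(3).} Let $v$ be as in (2), and write its linear part at $0$ as $A$; also $v(0)=0$. I would first show that $v$ has an isolated zero on $D$. If some curve in $D$ consisted of zeros of $v$, the radial quantity would vanish at its points on the links, contradicting (2). Then I would prove $A$ invertible by contradiction, using the curve selection lemma. Suppose $A$ has a nontrivial kernel. Using the real function $\rho(P)=\operatorname{Re}\langle v(P),P\rangle$, which is nonvanishing on $D\setminus 0$ near $0$ and hence of constant sign on each branch, I would analyze $v$ along curves $\gamma(t)\subset D$ tangent to the kernel or generalized eigenspaces. The aim is to find a real analytic arc in $D$ on which $\rho$ changes sign or vanishes. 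One natural route is to consider the flow of $\operatorname{Re} v$: it preserves $D$, and transversality makes $\|x\|^2$ strictly monotone along flow lines on $D\setminus0$. Thus $0$ is an attractor or repeller for the restricted flow on $D$. Degenerate linear part would force a center-manifold direction in which the flow is at best polynomially contracting. Combining this with the Łojasiewicz inequality and the isolated singularity, one should get a contradiction. For $n=1$, the irreducibility hypothesis enters here, to exclude a branch structure on which $v$ could be nilpotent. If $v$ itself cannot be shown non-degenerate, the fallback is to replace $v$ by $v+g$ with $g\in\operatorname{Der}(-\log D)$ chosen generically, for instance $g=\varepsilon\,(\text{Euler-type trivial fields } f_j\partial_i-f_i\partial_j)$. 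The set of fields in $\operatorname{Der}(-\log D)$ with invertible linear part is Zariski-open in the finite-dimensional space of linear parts, so it suffices to show this set is nonempty.

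\emph{(3)$\Rightarrow$(1).} Let $v$ be tangent to $D$ with invertible linear part $A$, so $v(f)=hf$. I would pass to formal or convergent normal form. Consider the semisimple part $S$ of $A$ and the associated formal Poincaré--Dulac decomposition $v=v_S+v_N$, where the semisimple component $v_S$ is intrinsically defined and is still logarithmic, because the Lie algebra $\operatorname{Der}(-\log D)$ is closed under taking this Jordan component (a standard fact for ideals preserved by $v$). In coordinates diagonalizing $v_S=\sum\lambda_i x_i\partial_i$, the ideal $(f)$ is $v_S$-stable. Hence the generator $f$ can be chosen (after multiplying by a unit) to be a $v_S$-eigenvector, so $f$ is a sum of monomials with $\sum\lambda_i a_i=c$. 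Next, non-degeneracy of $A$ gives $\lambda_i\ne0$. Because $f\in\mathfrak m^2$ has an isolated singularity, each variable must appear in monomials of $f$ in a way forcing $c\neq 0$ and permitting a choice of positive rational weights. Concretely, replacing $\lambda$ by its projection to real parts and rationalizing within the $\mathbb Q$-span of the resonance relations, one obtains positive rational weights for which $f$ is weighted homogeneous. I would then pass from formal to convergent coordinates via Saito's criterion: $f\in J(f)$ holds formally, hence analytically by faithful flatness. Showing the weights can be made positive is, I expect, the main obstacle together with the non-degeneracy step in (2)$\Rightarrow$(3). Here I would use the isolated singularity to guarantee monomials of the form $x_i^{a}x_j$ for each $i$, as in the Kouchnirenko/Saito weight analysis.
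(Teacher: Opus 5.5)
\textbf{The main gap is in $(2)\Rightarrow(3)$.} What you give there is a list of tools, not an argument.

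\emph{Why the sketch does not work.} Saying that a degenerate linear part yields a center direction along which the flow is ``at best polynomially contracting'' gives no contradiction by itself, since degenerate attractors exist for real flows (e.g.\ $\dot x=-x^3$). You also never explain why an ambient center direction should be visible on $D$ at all.

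More decisively, your sketch never uses $f\in\mathfrak m^2$, and without it the implication is false. Take $D=\{x_0=0\}$ and $v=\sum_{i\ge1}x_i\partial_{x_i}$. Then $v(x_0)=0$ and $R_v(P)=2\|P\|^2>0$ on $D\setminus\{0\}$, yet $Dv(0)$ has kernel direction $x_0$, which is transverse to $D$.

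Your ``generic perturbation'' fallback is circular: showing that this open set is nonempty is exactly (3).

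\emph{The missing idea} is a transfer from contraction on $D$ to the full Zariski cotangent space.
First, use connectedness of $D\setminus\{0\}$ to make the sign of $R_v$ constant. For $n\ge2$ this is Milnor's connectivity of the link; for an irreducible curve it comes from the normalization. This, not any nilpotency issue, is the role of irreducibility.
Next, the holomorphic real-time map $F=\Phi_{-T}$ of $v$ preserves $D$ and satisfies $\sup_K\|F^k\|\to0$ on $K=D\cap\overline{B_\epsilon}$.
Then apply a first-jet estimate $\|[u-u(0)]\|_{\mathfrak m_D/\mathfrak m_D^2}\le C\sup_K|u|$ to $\ell\circ F^k$ for linear $\ell$. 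The paper obtains this estimate from the Agler--Kosi\'nski--McCarthy extension theorem plus Cauchy estimates.
Finally, combine this with $\mathfrak m/\mathfrak m^2\cong\mathfrak m_D/\mathfrak m_D^2$, which is exactly where $f\in\mathfrak m^2$ enters. This gives $(e^{-TA})^k\to0$, hence $\operatorname{Re}\lambda>0$ for every eigenvalue $\lambda$ of $A$, after possibly replacing $v$ by $-v$.

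\textbf{In $(3)\Rightarrow(1)$ there is a false step, but it is easily repaired.} Passing to the formal semisimple part $v_S$ is a sound idea. However, your claim that isolatedness forces $c\neq0$ is false when $f$ has a nonzero quadratic part. For $f=x_0x_1+x_2x_3$ and $v=x_0\partial_{x_0}-x_1\partial_{x_1}+x_2\partial_{x_2}-x_3\partial_{x_3}$, one has $v(f)=0$ and $Dv(0)$ invertible.

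The repair:
\begin{enumerate}
\item If $c=0$, then $\sum_i\lambda_ix_i\,\partial\tilde f/\partial x_i=0$ is a syzygy of the regular sequence of partials of $\tilde f$. Hence each $\lambda_ix_i\in J(\tilde f)$, so $\mathfrak m\subseteq J(\tilde f)$, $\mu=1$, and $f$ is Morse.
\item If $c\neq0$, then $\tilde f\in J(\tilde f)$, and Saito's theorem itself supplies positive weights.
\end{enumerate}
So your paragraph on real parts and positivity is unnecessary, and as written it is unjustified. For $f=xy+z^2$ and $v=3x\partial_x-y\partial_y+z\partial_z$ one has $c=2$, but the real parts are $(3,-1,1)$.

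Once repaired, your step is leaner than the paper's. The paper uses the splitting lemma $f=q(z)+g(y)$ with $g\in\mathfrak m^3$, then Schulze's nilpotency obstruction. That obstruction is proved via the Granger--Schulze formal structure theorem, solvability of the initial logarithmic Lie algebra, and Lie's theorem. Working with the Jordan--Chevalley decomposition of a single field avoids all of that.
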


The proof has three inputs. The implication from weighted homogeneity is given by the weighted Euler vector field. The implication from real-Euclidean transversality to an invertible ambient linear part follows from a contracting-flow argument and a first-jet estimate on the singular analytic set; the first-jet estimate uses the norm-preserving extension theorem of Agler--Kosi\'nski--McCarthy \cite{AKM22}. The implication from an invertible logarithmic linear part to weighted homogeneity uses the holomorphic version of the splitting lemma of Arnol'd \cite{Arn72} (see also \cite[Lemmma 1]{GS06b}) and the nilpotency obstruction of   Schulze \cite{Sch10} for order $\geq 3$ non-quasi-homogeneous singularities of order at least three.. 

We note that, after establishing the existence of a contracting flow, its
integration gives rise to a contracting automorphism. Consequently, weighted homogeneity follows immediately from \cite[Theorem A]{Mor24}. Nevertheless, we
retain the original AI-generated proof, since it appears that the Rethlas system
did not locate this paper and instead generated a self-contained proof of \cite[Theorem A]{Mor24} in our
special setting of isolated hypersurface singularities. The proof Rethlas system generated is different from that of \cite{Mor24}.

The coordinate clause in Theorem~\ref{thm:main}(2) is necessary. In Section~\ref{sec:fixed-coordinates} we prove Proposition~\ref{prop:fixed-coordinate-failure}, which gives a weighted homogeneous surface singularity that admits no logarithmic vector field transverse to the standard round links in a particular linear coordinate system. Thus the correct formulation is coordinate-matched: the real-Euclidean transversality condition is genuinely coordinate-dependent, and the Euclidean metric must be evaluated in the same coordinates in which the transverse logarithmic field is asserted to exist.

\begin{rem}\label{rem:rethlas}
The main result of this paper was obtained by the Danus system, a specialized agent built on Rethlas and substantially more capable for fundamental mathematical research based on the Rethlas system. See \cite{Ju+26} for an introduction to the Rethlas system. The main equivalence and its auxiliary lemmas, including the singular first-jet estimate of Lemma~\ref{lem:jet-estimate}, the contracting-flow argument of Lemma~\ref{lem:contracting-flow}, and the fixed-coordinate counterexample of Proposition~\ref{prop:fixed-coordinate-failure}, were checked by the verifiers inside the system.  The main result of the paper has also been human-verified. 
Due to the limitation of automated systems, it is possible that we have missed related references in the literature, and we welcome comments from experts.

We note that, parallel to the present paper, we provide in \cite{LZ26} a different proof of Theorem~\ref{thm:main}. Whereas the present paper was generated by AI, the paper \cite{LZ26} was written entirely by human. It is therefore interesting to compare the two papers, both from a mathematical perspective and from the perspective of mathematical writing.

The two papers essentially address the same problem and we make both papers publicly available primarily for the mathematical information it contains, and also as an illustration of the potential of AI-assisted mathematical research.
\end{rem}

\subsection*{Acknowledgements}
The first author was partially supported by the National Key R\&D Program of China \#\allowbreak 2024YFA1014400. 

The first author would like to thank the Rethlas team, namely Haocheng Ju, Jiedong Jiang, Shurui Liu, Guoxiong Gao, Yuefeng Wang, Zeming Sun, Bin Wu, Liang Xiao, and Bin Dong, for their contributions to the development of Rethlas and its customized version (the Danus system) used for the problem studied in this paper. The first author would like to thank Ruochuan Liu and Gang Tian for constant support and encouragement.

\section{Set-up and definitions}\label{sec:setup}

In this section, we fix the notation and the transversality convention used throughout the paper.

Let $N=n+1$. We consider a reduced holomorphic germ
\[
f\in\OO_{\C^N,0}=\C\{x_0,\dots,x_n\}
\]
with $f\in\mathfrak m^2$, where $\mathfrak m$ is the maximal ideal at $0$, and we put
\[
(D,0)=\{f=0\}\subset(\C^N,0).
\]
We assume that $D$ has an isolated singularity at $0$ and write $n=\dim_{\C}D$. Our standing connectedness hypothesis is that either $n\ge2$, or $n=1$ and $D$ is irreducible.

\begin{defn}\label{defn:logarithmic}
A holomorphic vector field germ
\[
v=\sum_{i=0}^{n}\nu_i\frac{\partial}{\partial x_i}
\]
is \emph{logarithmic along $D$} if it is tangent to the smooth locus $D_{\mathrm{sm}}$. Equivalently, since $f$ is reduced,
\[
v(f)=h f
\]
for some $h\in\OO_{\C^N,0}$. We write the module of such vector fields as $\operatorname{Der}(-\log D)$.
\end{defn}

Let $B_\epsilon$ be the Euclidean ball of radius $\epsilon$ centered at $0$, let $S_\epsilon=\partial B_\epsilon$, and let $L_\epsilon=D\cap S_\epsilon$ be the link. For $P\in D_{\mathrm{sm}}\cap S_\epsilon$ and $v=\sum_i\nu_i\partial_{x_i}$, set
\[
\alpha_v(P):=\sum_{i=0}^{n}\nu_i(P)\overline{P_i},\qquad
R_v(P):=2\operatorname{Re}\alpha_v(P).
\]

\begin{defn}\label{defn:real-transversality}
We say that $v$ is \emph{real-Euclidean transverse to the small links} if
\[
R_v(P)\neq0
\]
for every $P\in D_{\mathrm{sm}}\cap S_\epsilon$ and every sufficiently small $\epsilon>0$.
\end{defn}

The quantity $R_v$ depends on the chosen coordinates, equivalently on the chosen Hermitian metric. Thus the assertion in Theorem~\ref{thm:main}(2) is that there exist coordinates and a logarithmic field for which Definition~\ref{defn:real-transversality} holds in those coordinates.

We say that $(D,0)$ is \emph{weighted homogeneous} if, after a biholomorphic change of coordinates, the defining equation can be chosen weighted homogeneous for positive weights. Equivalently, in suitable coordinates there exist positive real numbers $w_i$ and a positive real number $d$ such that
\[
\sum_{i=0}^{n}w_i x_i\frac{\partial f}{\partial x_i}=d f.
\]

Condition (2) of Theorem~\ref{thm:main} is the coordinate-matched real-Euclidean transversality statement, and no isolated-zero hypothesis is included in it. Condition (3) means that there exists $v\in\operatorname{Der}(-\log D)$ whose ambient linear part $Dv(0)$ is invertible; in that case $0$ is a nondegenerate isolated zero of the ambient vector field.

\section{Connectedness of the punctured germ}\label{sec:connectedness}

In this section, we record the connectedness input used to make the radial sign constant.

\begin{prop}\label{prop:connectedness}
Let $(D,0)=\{f=0\}\subset(\C^{n+1},0)$ be a reduced isolated hypersurface singularity. If $n\ge2$, then every sufficiently small punctured representative $D\cap B_\epsilon\setminus\{0\}$ is connected. If $n=1$ and $D$ is irreducible, then every sufficiently small punctured representative is connected.
\end{prop}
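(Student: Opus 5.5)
By the conic structure theorem of Milnor, for all sufficiently small $\epsilon>0$ the pair $(B_\epsilon, D\cap B_\epsilon)$ is homeomorphic to the cone over $(S_\epsilon, L_\epsilon)$. In particular, $D\cap B_\epsilon\setminus\{0\}$ is homeomorphic to $L_\epsilon\times(0,1]$. Hence it suffices to show that the link $L_\epsilon$ is connected in both cases of the statement.

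\emph{Case $n\ge2$.} Here I invoke Milnor's theorem that the link of an isolated hypersurface singularity in $\C^{n+1}$ is $(n-2)$-connected. For $n\ge2$ this gives $\pi_0(L_\epsilon)=0$, so $L_\epsilon$ is connected. The link is nonempty because $0\in D$ and $D$ has positive dimension.

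If one prefers to avoid citing $(n-2)$-connectivity, there is an alternative route via local irreducibility. An isolated singularity of a hypersurface of dimension $n\ge2$ is normal, by Serre's criterion: it is $S_2$ because it is a hypersurface, and $R_1$ because the singular locus has codimension $n\ge2$. A normal germ is irreducible. For an irreducible germ, the smooth locus of a small representative is connected; this is the standard local parametrization fact that irreducible germs have connected regular part near the point. Since the singular locus is just $\{0\}$, the smooth locus equals the punctured representative.

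\emph{Case $n=1$, $D$ irreducible.} An irreducible plane curve germ admits a Puiseux parametrization: a holomorphic map $\gamma$ from a small disc $\Delta$ onto $D\cap B_\epsilon$ that is injective. Then $D\cap B_\epsilon\setminus\{0\}$ is homeomorphic to a punctured disc, hence connected. Equivalently, $L_\epsilon$ is a single knot.

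The general principle used in both cases is that the number of connected components of the punctured germ equals the number of irreducible branches at $0$. In the curve case this holds by normalization. In the case $n\ge2$, normality forces a single branch.

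The main point requiring care is uniformity in $\epsilon$: connectedness must hold for every sufficiently small $\epsilon$, not just for one. The conic structure theorem supplies this, because the cone homeomorphisms exist for all $\epsilon\le\epsilon_0$ and the topological type of the link is independent of $\epsilon$ in that range. Everything else reduces to standard citations, namely Milnor's connectivity theorem (or normality) and the Puiseux parametrization.
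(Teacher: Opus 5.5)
Your proposal is correct and follows essentially the paper's argument. For $n\ge2$ you combine Milnor's $(n-2)$-connectivity of the link with the conic structure theorem, and for irreducible curves you use the injective disc parametrization (normalization/Puiseux), exactly as the paper does; your alternative route for $n\ge2$ via normality (Serre's criterion, hence irreducibility, hence connected smooth locus) is also valid but is not needed.
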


\begin{proof}
Assume first that $n\ge2$. By Milnor's connectivity theorem for isolated hypersurface singularities, the link $L_\epsilon=D\cap S_\epsilon$ is $(n-2)$-connected for all sufficiently small $\epsilon$ \cite{Mil68}. In particular, $L_\epsilon$ is connected. Milnor's local conical structure theorem \cite{Mil68} identifies
\[
D\cap\bigl(B_\epsilon\setminus\{0\}\bigr)
\]
with the cone direction $(0,\epsilon]$ times $L_\epsilon$, after shrinking $\epsilon$ if necessary. Hence the punctured representative is connected.

Assume now that $n=1$ and that $D$ is irreducible. The normalization of the reduced irreducible curve germ $D$ is a single smooth disc germ, and after shrinking the normalization map is finite and is an isomorphism over $D\setminus\{0\}$, since the only singular point of $D$ is $0$. Thus $D\setminus\{0\}$ is the image of a punctured disc under a continuous finite map that is bijective over the smooth punctured curve. The punctured disc is connected, so the punctured representative of $D$ is connected.
\end{proof}

\section{A singular first-jet estimate}\label{sec:jet}

In this section, we prove the analytic estimate that converts uniform contraction on the singular germ into contraction on the Zariski tangent space.

\begin{lem}[Singular first-jet estimate]\label{lem:jet-estimate}
Let $N\ge2$, let $f\in\OO_{\C^N,0}$ be reduced with $f\in\mathfrak m^2$, and put $D=\{f=0\}$. Suppose that for some $\epsilon>0$ the analytic set
\[
V_\epsilon:=D\cap B_\epsilon
\]
is connected. Let
\[
K_\epsilon:=D\cap\overline{B_\epsilon},
\]
and let $\mathfrak m_D$ be the maximal ideal of $\OO_{D,0}$. Fix any norm on $\mathfrak m_D/\mathfrak m_D^2$. Then there exists a constant $C_\epsilon>0$ such that every holomorphic function $u$ on a neighbourhood of $K_\epsilon$ in $D$ satisfies
\[
\left\|[u-u(0)]\right\|_{\mathfrak m_D/\mathfrak m_D^2}
\le C_\epsilon\sup_{p\in K_\epsilon}|u(p)|.
\]
\end{lem}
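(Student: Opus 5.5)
The plan is a functional-analytic one: the estimate says that a certain finite-rank linear functional is bounded for the sup norm, and I would get this from the open mapping theorem together with Cartan's Theorem B, rather than from explicit extension estimates.

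\textbf{Step 1: reduce to a differential.} Since $f\in\mathfrak m^2$, the ideal $(f)$ lies in $\mathfrak m^2$, so
\[
\mathfrak m_D/\mathfrak m_D^2\cong\mathfrak m/\mathfrak m^2\cong(\C^N)^*.
\]
Concretely, $[u-u(0)]$ is the differential $dU(0)$ of any holomorphic extension $U$ of $u$ to an ambient neighbourhood of $0$. This is well defined: two extensions differ by an element of $(f)\subset\mathfrak m^2$, which has zero differential at $0$. Hence the linear map
\[
J:\OO(V_\epsilon)\to\mathfrak m_D/\mathfrak m_D^2,\qquad J(u)=[u-u(0)],
\]
is defined on all holomorphic functions on the analytic set $V_\epsilon$. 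Note also that $\sup_{V_\epsilon}|u|\le\sup_{K_\epsilon}|u|$, so it suffices to bound $J$ in terms of $\sup_{V_\epsilon}|u|$.

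\textbf{Step 2: continuity of $J$ in the Fréchet topology.} Equip $\OO(V_\epsilon)$ and $\OO(B_\epsilon)$ with the topology of uniform convergence on compact sets; both are Fréchet spaces. For $\OO(V_\epsilon)$ completeness holds because the ideal of $V_\epsilon$ is coherent and, by Cartan's closure-of-ideals theorem, locally uniform limits of holomorphic functions on an analytic set are again holomorphic there. The ball $B_\epsilon$ is Stein and $V_\epsilon$ is a closed analytic subset, so Cartan's Theorem B makes the restriction map
\[
r:\OO(B_\epsilon)\to\OO(V_\epsilon)
\]
surjective. It is continuous and linear, so by the open mapping theorem it identifies $\OO(V_\epsilon)$ with the quotient of $\OO(B_\epsilon)$ by the closed ideal of $V_\epsilon$. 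The lifted map $U\mapsto dU(0)$ on $\OO(B_\epsilon)$ is continuous by the Cauchy estimates on a small polydisc around $0$. It vanishes on that ideal by Step 1, so it descends to a continuous map on the quotient, which is $J$.

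\textbf{Step 3: boundedness.} The set
\[
\{u\in\OO(V_\epsilon):\ \sup_{V_\epsilon}|u|\le1\}
\]
is bounded in the Fréchet topology, since every seminorm $\sup_{K}|u|$ over a compact $K\subset V_\epsilon$ is at most $1$ on it. A continuous linear map between locally convex spaces sends bounded sets to bounded sets. Therefore $J$ of this set lies in a ball of the finite-dimensional normed space $\mathfrak m_D/\mathfrak m_D^2$, of some radius $C_\epsilon$ (any two norms there are equivalent). By homogeneity,
\[
\|J(u)\|\le C_\epsilon\sup_{V_\epsilon}|u|\le C_\epsilon\sup_{K_\epsilon}|u|.
\]
Functions holomorphic on a neighbourhood of $K_\epsilon$ in $D$ restrict to $\OO(V_\epsilon)$, so the estimate covers all $u$ in the statement.

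The main obstacle is Step 2, namely the identification of $\OO(V_\epsilon)$ as a quotient Fréchet space. This requires completeness of $\OO(V_\epsilon)$, which rests on the closedness of ideals and on coherence, and it requires the surjectivity of $r$ from Cartan's Theorem B. If one wants an explicit constant instead, the fallback is the norm-preserving extension theorem of \cite{AKM22} cited in the introduction: an extension $U$ with $\sup|U|$ controlled by $\sup|u|$ would give $|dU(0)|\le\epsilon^{-1}\sup|u|$ directly by Cauchy. The connectedness hypothesis on $V_\epsilon$ does not seem needed in the soft argument; it would only serve to make the constant uniform.
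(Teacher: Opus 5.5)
Your argument is correct, and it reaches the estimate by a different route from the paper.

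\textbf{The paper's route.} The paper applies the Agler--Kosi\'nski--McCarthy norm-preserving extension theorem to the Cartan pair $(B_\epsilon,V_\epsilon)$. This gives a domain of holomorphy $G$ with $V_\epsilon\subset G\subset B_\epsilon$ and an extension $U\in\OO(G)$ with $\sup_G|U|=\sup_{V_\epsilon}|u|$. A Cauchy estimate on a polydisc $P_r\subset G$ then bounds $dU_0$ by $r^{-1}\sup_{K_\epsilon}|u|$. The identification of $dU_0$ with $[u-u(0)]$ is exactly your Step~1.

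\textbf{Your route.} You use Theorem~B, completeness of $\OO(V_\epsilon)$ and the open mapping theorem to show that $J$ is continuous for the compact-open topology. Once that is known, domination by $\sup_{V_\epsilon}|u|$ is automatic. The completeness you flag is indeed the one nontrivial input. It is standard for reduced spaces via Cartan's closure theorem, so citing it is fine.

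\textbf{What each buys.} The paper's route is shorter once the extension theorem is granted, and its constant has a concrete shape. But it rests on a deep recent result, and connectedness of $V_\epsilon$ enters the lemma only as a hypothesis of that result. Your route uses only classical Stein theory and gives a non-effective constant. That is all Lemma~\ref{lem:contracting-flow} needs, since it only uses $[\ell\circ F^k]\to0$. Your route also shows that the connectedness hypothesis is superfluous.

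\textbf{Two small corrections to your closing remarks.} First, with the AKM extension the Cauchy bound is $r^{-1}$ (with $r$ the radius of a polydisc inside $G$), not $\epsilon^{-1}$, because $U$ lives only on the possibly thin domain $G$, not on all of $B_\epsilon$. Second, the paper does not use connectedness to make a constant uniform; it uses it only to meet the hypotheses of the extension theorem.
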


\begin{proof}
Let $\Omega=B_\epsilon$ and $V=V_\epsilon=D\cap\Omega$. The pair $(\Omega,V)$ is a Cartan pair: $\Omega$ is a bounded pseudoconvex ball, hence a domain of holomorphy, and $V$ is the analytic subset cut out by $f$. Since $V$ is connected, the norm-preserving extension theorem of Agler--Kosi\'nski--McCarthy \cite{AKM22} gives a domain of holomorphy $G$ with
\[
V\subset G\subset\Omega
\]
such that every bounded holomorphic function on $V$ extends to a holomorphic function on $G$ with the same supremum norm.

Let $u$ be holomorphic on a neighbourhood of $K_\epsilon$ in $D$. Then $u|_V$ is bounded and
\[
\sup_V|u|\le\sup_{K_\epsilon}|u|.
\]
By the extension theorem, there exists $U\in\OO(G)$ such that $U|_V=u|_V$ and
\[
\sup_G|U|=\sup_V|u|\le\sup_{K_\epsilon}|u|.
\]
Since $0\in V\subset G$, choose $r>0$ such that the closed polydisc
\[
P_r=\{z\in\C^N\mid |z_i|\le r\text{ for all }i\}
\]
is contained in $G$. The one-variable Cauchy estimate in each coordinate direction gives
\[
\left|\frac{\partial U}{\partial x_i}(0)\right|\le r^{-1}\sup_G|U|
\]
for every $i=0,\dots,n$. Thus the ambient differential $dU_0$ is bounded by a constant, depending only on $r$ and $N$, times $\sup_{K_\epsilon}|u|$.

It remains to identify $dU_0$ with the class of $u-u(0)$ in $\mathfrak m_D/\mathfrak m_D^2$. Since
\[
\OO_{D,0}=\OO_{\C^N,0}/(f),
\]
we have
\[
\mathfrak m_D=\mathfrak m/(f),\qquad
\mathfrak m_D^2=(\mathfrak m^2+(f))/(f).
\]
Because $f\in\mathfrak m^2$, the natural map
\[
\mathfrak m/\mathfrak m^2\longrightarrow\mathfrak m_D/\mathfrak m_D^2
\]
is an isomorphism. The class $[u-u(0)]$ is represented by the image of the linear part of any ambient holomorphic extension of $u$ near $0$. The function $U$ is such an extension on $G$. If two ambient extensions are chosen, their difference vanishes on the reduced hypersurface $D$, hence lies in the principal ideal $(f)$ in the local ring; since $f\in\mathfrak m^2$, this difference has zero class in $\mathfrak m/\mathfrak m^2$. Thus the class is well-defined and is the image of $dU_0$.

All norms on the finite-dimensional vector space $\mathfrak m_D/\mathfrak m_D^2$ are equivalent. Combining this norm equivalence with the Cauchy estimate for $dU_0$ gives the desired constant $C_\epsilon$.
\end{proof}

\section{The contracting flow}\label{sec:flow}

In this section, we prove that real-Euclidean transversality forces the logarithmic vector field to have an invertible ambient linear part.

\begin{lem}\label{lem:vanishing-at-origin}
Let $f\in\OO_{\C^N,0}$ be reduced, let $D=\{f=0\}$ have isolated singular locus at $0$, and let $v$ be a holomorphic vector field germ satisfying $v(f)=h f$ for some holomorphic germ $h$. Then $v(0)=0$.
\end{lem}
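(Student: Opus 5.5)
The plan is to argue by contradiction: a logarithmic field not vanishing at the singular point would make $D$ locally a product along the field's flow, contradicting isolatedness. Suppose $v(0)\neq0$. Then, after a local biholomorphic change of coordinates near $0$ (the flow-box theorem for a nonvanishing holomorphic vector field), we may write $v=\partial/\partial y_0$ in coordinates $(y_0,\dots,y_n)$. The condition $v(f)=hf$ transforms into $\partial g/\partial y_0=\tilde h\,g$, where $g$ is $f$ expressed in the new coordinates and $\tilde h$ is the transformed multiplier.

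We then solve this linear ODE in $y_0$. Choose a holomorphic germ $H$ with $\partial H/\partial y_0=\tilde h$, for instance $H=\int_0^{y_0}\tilde h\,dt$. The function $e^{-H}g$ has vanishing $y_0$-derivative, so
\[
g(y_0,y')=e^{H(y_0,y')}\,g(0,y').
\]
Thus $g$ equals a unit times a germ $g_0(y')$ independent of $y_0$, and $D=\{g_0(y')=0\}$ is, near $0$, a product $\C\times D'$.

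Next we transfer the singularity along the $y_0$-direction. Since $0$ is a singular point of $D$ (as $f\in\mathfrak m^2$, and $f$ is reduced), the point $0$ is singular for $D'$ as well; smoothness of $D$ at a point is invariant under multiplying the defining equation by a unit. Then every point $(t,0)$ with $t$ small lies in $D$ and is singular, because the gradient of $g_0$ vanishes at $y'=0$. Here we should use that $g_0$ is reduced, which follows from $g$ being reduced; reducedness ensures the singular locus is detected by the vanishing of the gradient. This gives a curve of singular points through $0$, contradicting the assumption that the singular locus is isolated at $0$.

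The only delicate step is the final one: checking that singularity of $D$ at the origin genuinely propagates along the line $\{y'=0\}$. This is the point where reducedness is needed, so that "singular" means the gradient of the reduced defining function vanishes. With $g_0$ reduced and $dg_0(0)=0$, the conclusion is immediate.
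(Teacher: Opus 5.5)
Your proposal is correct and follows essentially the same route as the paper: you rectify $v$ to $\partial/\partial y_0$, integrate $\partial_{y_0} g=\tilde h\, g$ to write $g$ as a unit times a germ $g_0(y')$, and then observe that the whole line $\{y'=0\}$ consists of singular points, contradicting isolatedness. Your explicit remark that reducedness of $g_0$ (hence of $g$ at the points $(t,0)$) is what allows a vanishing gradient to detect singular points is a small clarification that the paper leaves implicit.
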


\begin{proof}
Suppose that $v(0)\neq0$. Rectifying the nonvanishing holomorphic vector field $v$, we obtain local holomorphic coordinates $(y_1,\dots,y_N)$ centered at $0$ in which
\[
v=\frac{\partial}{\partial y_1}.
\]
The logarithmic equation becomes
\[
\frac{\partial f}{\partial y_1}=h f.
\]
For each fixed $y'=(y_2,\dots,y_N)$, this ordinary differential equation in the $y_1$-variable gives
\[
f(y_1,y')=u(y_1,y')\,f(0,y')
\]
for a holomorphic unit $u$, obtained by integrating $h$ in the $y_1$-direction. Thus, up to multiplication by a unit, the hypersurface germ $D$ is the product of the $y_1$-line with the hypersurface germ $\{f(0,y')=0\}\subset(\C^{N-1},0)$.

Since $D$ is singular at $0$, the factor $\{f(0,y')=0\}$ is singular at $y'=0$. Hence every point with $y'=0$ and $y_1$ sufficiently small lies in the singular locus of $D$: all derivatives in the $y_2,\dots,y_N$ directions vanish because the factor is singular at $y'=0$, and the $y_1$-derivative is $hf$, which vanishes along $D$. This gives a positive-dimensional singular locus through $0$, contradicting the isolatedness assumption. Therefore $v(0)=0$.
\end{proof}

\begin{lem}[Real transversality gives a nondegenerate zero]\label{lem:contracting-flow}
Let $N\ge2$, let $f\in\OO_{\C^N,0}$ be reduced with $f\in\mathfrak m^2$, and let $D=\{f=0\}$ have an isolated singularity at $0$. Assume that $D\setminus\{0\}$ is connected after shrinking the representative. Let
\[
v=\sum_i\nu_i\frac{\partial}{\partial x_i}\in\operatorname{Der}(-\log D)
\]
and suppose that $R_v$ is nonzero on $D\setminus\{0\}$ sufficiently near $0$. Then, after possibly replacing $v$ by $-v$, every eigenvalue of $A:=Dv(0)$ has positive real part. In particular, $A$ is invertible and $0$ is a nondegenerate isolated ambient zero of $v$.
\end{lem}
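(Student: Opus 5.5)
Since $D\setminus\{0\}$ is connected and $R_v$ is continuous and nonvanishing there, $R_v$ has constant sign on $D\setminus\{0\}$ near $0$. I will replace $v$ by $-v$ if necessary so that $R_v<0$ on $D\cap B_{\epsilon_0}\setminus\{0\}$ for some $\epsilon_0>0$. The conclusion then becomes: every eigenvalue of $Dv(0)$ has negative real part, which is the stated conclusion for $-v$. By Lemma~\ref{lem:vanishing-at-origin}, $v(0)=0$. Write $A=Dv(0)$ and let $\varphi_t$, for real $t\ge0$, be the flow of the real vector field $\operatorname{Re}v$. For each fixed real $t$, $\varphi_t$ is a holomorphic map wherever it is defined, because $v$ is holomorphic. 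Since $v$ is tangent to $D_{\mathrm{sm}}$ and fixes $0$, the flow preserves $D$.

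\textbf{Step 1: monotonicity and uniform contraction.} Along a trajectory $x(t)$ of $\operatorname{Re}v$ we have
\[
\frac{d}{dt}|x(t)|^2=2\operatorname{Re}\sum_i\nu_i(x(t))\overline{x_i(t)}=R_v(x(t)).
\]
So $|x(t)|$ is strictly decreasing along trajectories in $D\setminus\{0\}$ inside $B_{\epsilon_0}$. Consequently, for $P\in D\cap\overline{B_{\epsilon_1}}$ with $\epsilon_1<\epsilon_0$, the trajectory stays in $D\cap\overline{B_{\epsilon_1}}$ and the flow is defined for all $t\ge0$. This holds on a neighbourhood of $K_\epsilon=D\cap\overline{B_\epsilon}$ in $D$ for any fixed $\epsilon<\epsilon_1$.

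On each compact annulus $D\cap\{\delta\le|x|\le\epsilon_1\}$ we have $R_v\le -c_\delta<0$. Hence every trajectory starting in $K_\epsilon$ enters $B_\delta$ before time $\epsilon_1^2/c_\delta$, and by monotonicity it never leaves. Therefore
\[
M(t):=\sup_{p\in K_\epsilon}|\varphi_t(p)|\longrightarrow 0\qquad (t\to\infty).
\]

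\textbf{Step 2: passing to the Zariski cotangent space.} By Proposition~\ref{prop:connectedness}, together with the fact that adding $0$ preserves connectedness, $V_\epsilon=D\cap B_\epsilon$ is connected. So Lemma~\ref{lem:jet-estimate} applies. Take $u=x_i\circ\varphi_t$; it is holomorphic on a neighbourhood of $K_\epsilon$ in $D$ and satisfies $u(0)=0$. Because $\varphi_t$ is the restriction of the ambient flow, which fixes $0$ and has linear part $e^{tA}$, the class of $u$ in $\mathfrak m_D/\mathfrak m_D^2\cong\mathfrak m/\mathfrak m^2$ is the $i$-th row of $e^{tA}$, viewed as a linear form. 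Here I use the isomorphism $\mathfrak m/\mathfrak m^2\cong\mathfrak m_D/\mathfrak m_D^2$ from the proof of Lemma~\ref{lem:jet-estimate}. The lemma then gives
\[
\|e^{tA}\|\le C_\epsilon' \,M(t)\longrightarrow0 .
\]
Thus $e^{tA}\to0$ as $t\to+\infty$. This forces every eigenvalue $\lambda$ of $A$ to satisfy $\operatorname{Re}\lambda<0$, since an eigenvector with $\operatorname{Re}\lambda\ge0$ would give $|e^{t\lambda}|\ge1$. Undoing the sign change, every eigenvalue of $Dv(0)$ has positive real part. In particular $A$ is invertible, and $0$ is a nondegenerate, hence isolated, zero of $v$.

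\textbf{Main obstacle.} The delicate point is Step 1 combined with the hypotheses of Lemma~\ref{lem:jet-estimate}. We need $\varphi_t$ to be defined and holomorphic for all large $t$ on an honest neighbourhood of $K_\epsilon$ in $D$, not merely on $K_\epsilon$ itself. This is why I work with two radii $\epsilon<\epsilon_1<\epsilon_0$ and use the monotonicity of $|x|$ on all of $D\cap B_{\epsilon_1}$. The other points are routine once set up: identifying the class of $x_i\circ\varphi_t$ with the linear part $e^{tA}$ of the ambient flow, and noting that the flow preserves $D$ because $v$ is logarithmic.
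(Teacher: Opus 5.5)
Your proposal is correct and follows essentially the paper's argument: sign constancy from connectedness, $v(0)=0$, monotonicity of $|x|^2$ along the flow, uniform contraction on compact annuli, Lemma~\ref{lem:jet-estimate} applied to coordinate functions composed with the flow, and reading off the eigenvalues from the linearization of the flow. The only difference is presentational. You use the continuous-time flow and show $e^{tA}\to0$ as $t\to\infty$, whereas the paper iterates a fixed backward time-$T$ map $F=\Phi_{-T}$ and uses $\bigl((DF(0))^*\bigr)^k\to0$ with $DF(0)=\exp(-TA)$. Your two-radii treatment of the requirement that $\varphi_t$ be holomorphic on a genuine neighbourhood of $K_\epsilon$ in $D$ is slightly more careful than the paper's.
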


\begin{proof}
By Lemma~\ref{lem:vanishing-at-origin}, $v(0)=0$. Since $D\setminus\{0\}$ is connected and $R_v$ is continuous and never zero there, $R_v$ has constant sign after shrinking. Replacing $v$ by $-v$ if necessary, we may assume that
\[
R_v>0
\]
on the punctured representative.

Choose $\epsilon>0$ so small that
\[
K:=D\cap\overline{B_\epsilon}
\]
is a compact representative, $D\setminus\{0\}$ is smooth in $B_\epsilon$, the set $D\cap B_\epsilon$ is connected, and $R_v>0$ on $K\setminus\{0\}$. Let $\Phi_t$ be the local holomorphic flow of $v$. For a sufficiently small $T>0$, the flow is defined on a neighbourhood of $K$ for $|t|\le T$. The equation $v(f)=hf$ gives
\[
\frac{d}{dt}f(\Phi_t(x))=h(\Phi_t(x))f(\Phi_t(x)),
\]
so the flow preserves $D$ wherever it is defined. Put
\[
F=\Phi_{-T}|_D.
\] 

Let $\rho(x)=\|x\|^2$. If $\gamma_x(s)=\Phi_{-s}(x)$ for $0\le s\le T$, then for $x\in K\setminus\{0\}$,
\[
\frac{d}{ds}\rho(\gamma_x(s))=-R_v(\gamma_x(s))<0.
\]
Thus $\gamma_x(s)$ remains in $K$, and
\[
\|F(x)\|<\|x\|
\]
for every $x\in K\setminus\{0\}$. The decrease is uniform on each compact annulus. More precisely, for every $\delta\in(0,\epsilon)$ the continuous function
\[
x\longmapsto \|x\|^2-\|F(x)\|^2
\]
has a positive minimum on
\[
K\cap\{\delta\le\|x\|\le\epsilon\}.
\]
It follows that for every $\delta>0$, all $F$-orbits starting in $K$ enter $D\cap B_\delta$ after a number of iterates bounded independently of the starting point. Hence
\[
\sup_{x\in K}\|F^k(x)\|\longrightarrow0
\]
as $k\to\infty$.

We now apply Lemma~\ref{lem:jet-estimate}. Let $\ell$ be any ambient complex linear function. The functions $\ell\circ F^k$ are holomorphic on a neighbourhood of $K$ in $D$, vanish at $0$, and satisfy
\[
\sup_K|\ell\circ F^k|\le\|\ell\|\sup_K\|F^k\|\longrightarrow0.
\]
Therefore Lemma~\ref{lem:jet-estimate} gives
\[
[\ell\circ F^k]\longrightarrow0
\]
in $\mathfrak m_D/\mathfrak m_D^2$. Since $f\in\mathfrak m^2$, the natural map
\[
\mathfrak m/\mathfrak m^2\longrightarrow\mathfrak m_D/\mathfrak m_D^2
\]
is an isomorphism, and the ambient linear functions span this cotangent space. Moreover,
\[
[\ell\circ F^k]=\bigl((DF(0))^*\bigr)^k[\ell].
\]
Thus the powers of the finite-dimensional operator $(DF(0))^*$ tend to $0$, and every eigenvalue of $DF(0)$ has modulus $<1$.

Since $F$ is the time $-T$ map of $v$ and $v(0)=0$, we have
\[
DF(0)=\exp(-T A).
\]
If $\lambda$ is an eigenvalue of $A$, then $\exp(-T\lambda)$ is an eigenvalue of $DF(0)$, whence
\begin{equation}\label{eq:spectral-bound}
|\exp(-T\lambda)|<1 .
\end{equation}
Since $T>0$, the bound~\eqref{eq:spectral-bound} is equivalent to $\operatorname{Re}\lambda>0$. Hence every eigenvalue of $A$ has positive real part. In particular $A$ is invertible, and the holomorphic inverse function theorem applied to the coefficient map of $v$ shows that $0$ is an isolated nondegenerate ambient zero.
\end{proof}

\section{From a nondegenerate logarithmic zero to weighted homogeneity}\label{sec:nondegenerate}

In this section, we prove the implication from condition (3) to condition (1).

\begin{lem}\label{lem:c-to-a}
Let $f\in\C\{x_0,\dots,x_n\}$ be reduced, assume $f\in\mathfrak m^2$, and let $D=\{f=0\}$ have an isolated hypersurface singularity at $0$. Suppose that there exists a holomorphic vector field
\[
v=\sum_i\nu_i\frac{\partial}{\partial x_i}
\]
such that $v(f)=hf$ and $A:=Dv(0)$ is invertible. Then $(D,0)$ is weighted homogeneous after a biholomorphic change of coordinates.
\end{lem}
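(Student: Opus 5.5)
We follow the route announced in the introduction: reduce by the splitting lemma, then separate the case of multiplicity two from the case of multiplicity at least three, where Schulze's nilpotency obstruction applies.

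\textbf{Step 1: splitting.} By the holomorphic splitting lemma of Arnol'd (see \cite[Lemma 1]{GS06b}), after a biholomorphic change of coordinates we write
\[
f = u\cdot\bigl(g(x_0,\dots,x_k) + x_{k+1}^2+\dots+x_n^2\bigr)
\]
with $u$ a unit and $g\in\mathfrak m^3$, where $k+1$ is the corank of the Hessian. Then $(D,0)$ is weighted homogeneous if and only if $(\{g=0\},0)$ is, because quadratic squares can be given weight $d/2$. It remains to check that the hypothesis passes to $g$. The splitting lemma is compatible with logarithmic fields in the following sense: the existence of a logarithmic field with invertible linear part is invariant under contact equivalence, since $\operatorname{Der}(-\log D)$ depends only on the ideal $(f)$ and is transported by coordinate changes. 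In the split form we will produce a logarithmic field for $g$ with invertible linear part in the following way. Decompose $\operatorname{Der}(-\log D)$ for a suspension $g+y^2$. A field $v$ with $v(g+q)=h(g+q)$, restricted and projected to the $x$-variables, gives after a standard argument (as used by Schulze and Hauser--M\"uller) a field in $\operatorname{Der}(-\log\{g=0\})$ whose linear part is the $x$-block of $A$. I expect this block to remain invertible, because the linear part of $v$ must preserve the kernel of the Hessian; this kernel is the tangent cone directions of $g$.

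\textbf{Step 2: case $k+1=0$ or $g$ of order at least $3$.} If the Hessian is nondegenerate, then $f$ is an $A_1$ singularity and is homogeneous, so we are done. Otherwise $g\in\mathfrak m^3$ defines an isolated singularity. Schulze \cite{Sch10} proved that if such a $g$ is not quasi-homogeneous, then every logarithmic vector field has nilpotent linear part. Our $g$ admits a logarithmic field with invertible linear part, which is not nilpotent, so $g$ must be quasi-homogeneous. By Saito's theorem, quasi-homogeneous is equivalent to weighted homogeneous after a coordinate change.

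\textbf{Main obstacle.} I expect the hardest step to be the transfer in Step 1, namely producing from $v$ a logarithmic field for $g$ alone with invertible linear part. The issue is that $v$ may mix the $x$- and $y$-variables. My first attempt will use the uniqueness of the splitting up to right equivalence to reduce to the case where $v$ preserves the decomposition. Concretely, I will apply the $y$-derivatives: $\partial_y$ of the equation $v(g+q)=h(g+q)$ at $y=0$, combined with the fact that the $\mathfrak m$-adic linear part of $v$ preserves the radical of the Hessian form, should let me set $y=0$ and project. If this fails, the fallback is Schulze's formulation directly for $f$ with $f$ of arbitrary order after suspension. His result is stated up to stabilization, since nilpotency of the semisimple part of the linear part is stable under adding squares, and in that form it may avoid the transfer entirely.
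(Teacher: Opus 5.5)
Your outline matches the paper's: the splitting lemma, then transfer to $g$ by restricting to the Hessian kernel, then Schulze's obstruction for $g\in\mathfrak m^3$, then weight $d/2$ on the squares. However, the transfer step is the only part of the lemma not supplied by a citation, and you leave it as an expectation, so as written there is a gap.

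The missing argument is a degree-two comparison. Write $f=q(z)+g(y)$ with $q=\sum_i z_i^2$, and $A(z,y)=(Bz+Cy,\,D_0z+Ey)$.

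\begin{itemize}
\item By Lemma~\ref{lem:vanishing-at-origin}, $v(0)=0$. You need this anyway for the linear part to transform by conjugation under your coordinate changes.
\item Since $g\in\mathfrak m^3$, also $v(g)\in\mathfrak m^3$. Hence the degree-two part of $v(q+g)=h(q+g)$ is $(dq)_z(Bz+Cy)=h(0)\,q(z)$.
\item The right side has no mixed $z$--$y$ terms, so $\sum_i z_i(Cy)_i\equiv0$ and therefore $C=0$.
\end{itemize}

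Thus $A$ is block lower triangular and preserves the Hessian kernel $\{z=0\}$, which is the claim you only ``expect''. Invertibility of $A$ then forces $E$ to be invertible.

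No ``standard argument'' from Hauser--M\"uller is needed to produce the field for $g$. Since $q$ and $\partial q/\partial z_i=2z_i$ vanish on $\{z=0\}$, the field $\theta=\sum_j\nu_{y_j}(0,y)\,\partial_{y_j}$ satisfies $\theta(g)=h(0,y)\,g$ and has linear part $E$.

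The alternative routes you propose would not work.

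\begin{itemize}
\item Reducing to a $v$ that preserves the splitting aims at something false. Only the kernel is invariant, and the block $D_0$ may be nonzero. For $f=z^2+y^3$, the field $3z\partial_z+2y\partial_y+z\partial_y-\tfrac{3}{2}y^2\partial_z$ satisfies $v(f)=6f$. Its linear part $(z,y)\mapsto(3z,\,z+2y)$ is invertible and does not preserve the $z$-axis.
\item Your fallback via stabilization also fails. When $r\ge2$, the rotation $z_1\partial_{z_2}-z_2\partial_{z_1}$ annihilates $q$, so it is logarithmic for every $q+g$. Its linear part is non-nilpotent whether or not $g$ is quasi-homogeneous.
\end{itemize}

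So Schulze's obstruction genuinely requires order at least three, and the transfer to $g$ cannot be bypassed.
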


\begin{proof}
Put $N=n+1$. By the holomorphic splitting lemma \cite{Arn72}, after a biholomorphic change of coordinates we may write
\[
f=q(z)+g(y),
\]
where $z=(z_1,\dots,z_r)$, $y=(y_1,\dots,y_s)$, $r+s=N$,
\[
q=z_1^2+\cdots+z_r^2,
\]
and either $s=0$ and $g=0$, or $g\in\C\{y_1,\dots,y_s\}$ belongs to the cube of the maximal ideal and has an isolated critical point at $0$. If $s=0$, then $f=q$ is weighted homogeneous. We may therefore assume $s>0$.

Write the linear part of $v$ in block form relative to
\[
\C^N=\C_z^r\oplus\C_y^s:
\]
\[
A(z,y)=(Bz+Cy,\;D_0z+Ey),
\]
where $B\colon\C_z^r\to\C_z^r$, $C\colon\C_y^s\to\C_z^r$, $D_0\colon\C_z^r\to\C_y^s$, and $E\colon\C_y^s\to\C_y^s$ are complex linear maps. Let $h_0=h(0)$. The degree-two part of the logarithmic equation $v(f)=hf$ comes only from the linear part of $v$ acting on $q$, because $g$ has order at least three. Hence
\[
(dq)_z(Bz+Cy)=h_0q(z)
\]
for all $z$ and $y$. The left-hand side has a bilinear $z$-$y$ part, namely $(dq)_z(Cy)$, while the right-hand side has no such term. Since $q$ is nondegenerate, the identity $(dq)_z(Cy)=0$ for all $z,y$ implies $C=0$. Thus $A$ is block lower triangular. Since $A$ is invertible, both diagonal blocks $B$ and $E$ are invertible.

Now restrict the logarithmic equation to the linear subspace $z=0$. Define a vector field on the $y$-space by
\[
\theta(y):=\sum_{j=1}^{s}\nu_{y_j}(0,y)\frac{\partial}{\partial y_j}.
\]
Since $q$ and $dq$ vanish when $z=0$, we obtain
\[
\theta(g)=h(0,y)g.
\]
Thus $\theta\in\operatorname{Der}(-\log\{g=0\})$, and its linear part at $0$ is exactly $E$, which is invertible.

We use the following nilpotency obstruction, due to Schulze \cite[Proposition~4]{Sch10}: if $g\in\mathfrak m_y^3$ defines an isolated hypersurface singularity and is not weighted homogeneous, then every logarithmic vector field along $\{g=0\}$ has nilpotent linear part. Granting this, the invertibility of $E$ rules out the non-weighted-homogeneous case, so $g$ is weighted homogeneous.

For completeness we prove the obstruction. Since $g$ is not weighted homogeneous, $s\ge2$: a germ in one variable lying in $\mathfrak m_y^3$ is $y_1^k$ times a unit with $k\ge3$, hence weighted homogeneous after the coordinate change $Y=y_1u(y_1)^{1/k}$. Moreover $g$ is reduced, for a repeated irreducible factor $\varphi$ would divide every first partial derivative of $g$, and then the positive-dimensional germ $\{\varphi=0\}$ would lie in the singular locus of $\{g=0\}$, contradicting isolatedness.

By the formal structure theorem of Granger--Schulze \cite[Theorem~5.4]{GS06a}, after a formal change of coordinates the module $\operatorname{Der}(-\log\{g=0\})$ admits a minimal generating system consisting of diagonal fields $\sigma_1,\dots,\sigma_p$, each having rational eigenvalues and satisfying $\sigma_i(g)\in\Q g$, together with fields $\nu_1,\dots,\nu_q$ whose linear parts at $0$ are nilpotent. We claim $p=0$. Let $\sigma=\sum_j w_j y_j\partial_{y_j}$ be a diagonal field with $\sigma(g)=cg$ for some $c\in\C$. If $c\neq0$, then
\[
g=c^{-1}\sigma(g)=c^{-1}\sum_j w_j y_j\frac{\partial g}{\partial y_j}\in J(g),
\]
where $J(g)=(\partial g/\partial y_1,\dots,\partial g/\partial y_s)$ is the Jacobian ideal; by Saito's criterion \cite{Sai71}, $g$ is then weighted homogeneous after a biholomorphic change of coordinates, contrary to hypothesis. If $c=0$, then $\sum_j(w_jy_j)\,\partial g/\partial y_j=0$. Since $\{g=0\}$ has an isolated singularity, $J(g)$ is $\mathfrak m_y$-primary, so in the regular local ring $\C\{y_1,\dots,y_s\}$ the partial derivatives $\partial g/\partial y_1,\dots,\partial g/\partial y_s$ form a regular sequence \cite{Mat89}. Every syzygy of a regular sequence is generated by the Koszul relations, so the syzygy $\sum_j(w_jy_j)\,\partial g/\partial y_j=0$ forces each $w_jy_j$ to lie in $J(g)$. But $g\in\mathfrak m_y^3$ gives $\partial g/\partial y_j\in\mathfrak m_y^2$, hence $J(g)\subseteq\mathfrak m_y^2$; since $w_jy_j$ is a linear form, $w_jy_j=0$ for every $j$, that is $\sigma=0$. This proves $p=0$, so every generator of $\operatorname{Der}(-\log\{g=0\})$ has nilpotent linear part.

Let $L\subseteq\operatorname{End}_{\C}(\C^s)$ be the Lie algebra of linear parts at $0$ of $\operatorname{Der}(-\log\{g=0\})$. Since the singularity is isolated, every logarithmic field vanishes at $0$ by Lemma~\ref{lem:vanishing-at-origin}, so the linear part of an $\OO$-linear combination $\sum_k a_k\nu_k$ is the constant-coefficient combination $\sum_k a_k(0)(\nu_k)_0$; hence $L$ is the $\C$-span of the nilpotent linear parts $(\nu_k)_0$. By \cite[Proposition~1.4]{GS09}, the initial logarithmic Lie algebra of an isolated singularity of order at least three is solvable, so $L$ is solvable. By Lie's theorem \cite[\S4.1]{Hum72}, the elements of $L$ are simultaneously upper triangular in a suitable basis. Each $(\nu_k)_0$ is nilpotent and upper triangular, hence strictly upper triangular, so every element of the span $L$ is strictly upper triangular and therefore nilpotent. This proves the obstruction.

If $g$ has positive weights $a_j$ and weighted degree $d$, assign each quadratic variable $z_i$ the weight $d/2$. Then $q+g$ is weighted homogeneous of degree $d$. Thus $f$ is weighted homogeneous after a biholomorphic coordinate change.
\end{proof}

\section{Weighted homogeneous coordinates and the proof of the main theorem}\label{sec:assembly}

In this section, we prove the easy direction in weighted homogeneous coordinates and assemble the equivalence.

\begin{lem}\label{lem:euler}
Assume that $f$ is weighted homogeneous in coordinates $x_0,\dots,x_n$, with positive weights $w_i$ and weighted degree $d$. Then the weighted Euler vector field
\[
E=\sum_{i=0}^{n}w_i x_i\frac{\partial}{\partial x_i}
\]
is logarithmic along $D=\{f=0\}$, satisfies real-Euclidean transversality on all small links in these coordinates, and has invertible linear part at $0$.
\end{lem}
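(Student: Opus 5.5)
We verify the three assertions of Lemma~\ref{lem:euler} directly. For logarithmicity, the defining identity of weighted homogeneity gives
\[
E(f)=\sum_{i=0}^{n}w_i x_i\frac{\partial f}{\partial x_i}=d f ,
\]
so $E(f)=hf$ with the constant $h=d$, and $E\in\operatorname{Der}(-\log D)$ by Definition~\ref{defn:logarithmic}. If one prefers a derivation of the Euler identity rather than taking it as the definition, differentiate $f(t^{w_0}x_0,\dots,t^{w_n}x_n)=t^df(x)$ at $t=1$. That functional equation holds termwise for monomials $x^\alpha$ with $\sum_i w_i\alpha_i=d$. For non-integer weights we would simply use the Euler equation as the definition, exactly as the paper states in Section~\ref{sec:setup}.

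For the invertible linear part, $E$ is already linear with coefficient map $x\mapsto(w_0x_0,\dots,w_nx_n)$. Hence $DE(0)=\operatorname{diag}(w_0,\dots,w_n)$, which is invertible because every $w_i>0$.

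For real-Euclidean transversality, compute at any point $P$ with $\|P\|=\epsilon$:
\[
\alpha_E(P)=\sum_{i=0}^{n}w_iP_i\overline{P_i}=\sum_{i=0}^{n}w_i|P_i|^2 .
\]
This is real, so $R_E(P)=2\sum_i w_i|P_i|^2\ge 2\min_i w_i\,\|P\|^2=2\min_i w_i\,\epsilon^2>0$. This holds at every point of every sphere $S_\epsilon$, in particular at all smooth points of $D$ on all links, and for every $\epsilon>0$. Definition~\ref{defn:real-transversality} is therefore satisfied in these same coordinates.

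There is no real obstacle here. The only point needing care is to state that the Euclidean pairing is taken in the weighted homogeneous coordinates themselves. This makes $\alpha_E$ a positive-definite diagonal Hermitian form, so that no cross terms appear.
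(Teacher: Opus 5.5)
Your proposal is correct and follows the same argument as the paper's proof. You use the Euler identity $E(f)=df$ for logarithmicity, the diagonal positive-definite form $\alpha_E(P)=\sum_i w_i|P_i|^2$ (computed in the weighted homogeneous coordinates) for transversality, and $DE(0)=\operatorname{diag}(w_0,\dots,w_n)$ for invertibility.
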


\begin{proof}
Weighted homogeneity gives
\[
E(f)=d f,
\]
so $E$ is logarithmic along $D$. For every nonzero point $P$,
\[
\alpha_E(P)=\sum_{i=0}^{n}w_i|P_i|^2
\]
is a positive real number. Hence
\[
R_E(P)=2\sum_{i=0}^{n}w_i|P_i|^2>0
\]
on every small link. The linear part of $E$ is the diagonal matrix
\[
\operatorname{diag}(w_0,\dots,w_n),
\]
which is invertible because all $w_i$ are positive.
\end{proof}

\begin{proof}[Proof of Theorem~{\rm\ref{thm:main}}]
Assume (1). Choose weighted homogeneous coordinates. By Lemma~\ref{lem:euler}, the weighted Euler vector field satisfies the real-Euclidean transversality condition in those coordinates and has invertible linear part. Thus (1) implies both (2) and (3).

Assume (2). Work in the coordinates in which (2) holds. By Proposition~\ref{prop:connectedness}, the punctured representative is connected. Lemma~\ref{lem:contracting-flow} applies to the logarithmic vector field in (2) and shows, after possibly multiplying it by $-1$, that its ambient linear part is invertible. Thus (2) implies (3).

Assume (3). Lemma~\ref{lem:c-to-a} gives that $(D,0)$ is weighted homogeneous after a biholomorphic change of coordinates. Thus (3) implies (1).

The three implications prove the equivalence of (1), (2), and (3).
\end{proof}

\section{Failure of the fixed-coordinate formulation}\label{sec:fixed-coordinates}

In this section, we prove that the implication from weighted homogeneity to real-Euclidean transversality is false if the Euclidean links are kept fixed in arbitrary linear coordinates.

\begin{lem}\label{lem:linear-part-brieskorn}
Let
\[
F(u,v,w)=u^2+v^3+w^6
\]
and let
\[
\theta=a\frac{\partial}{\partial u}
+b\frac{\partial}{\partial v}
+c\frac{\partial}{\partial w}
\]
be a holomorphic vector field germ satisfying $\theta(F)=HF$ for some holomorphic germ $H$. If the ordinary linear part of $\theta$ at $0$ is
\[
B(u,v,w)=(a_1,b_1,c_1),
\]
then there exist complex numbers $\lambda,\mu,\nu$ such that
\[
a_1=3\lambda u,\qquad
b_1=\mu u+2\lambda v,\qquad
c_1=\nu u+\lambda w.
\]
\end{lem}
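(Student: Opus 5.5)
Write $\theta=\theta_1+\theta_{\ge2}$, where $\theta_1$ is the linear part $(a_1,b_1,c_1)$, and expand $H=H_0+H_1+\cdots$ by ordinary degree. By Lemma~\ref{lem:vanishing-at-origin}, $\theta(0)=0$, so $\theta$ has no constant term. The plan is to compare homogeneous components of ordinary degree on both sides of $\theta(F)=HF$, in the ordinary grading. Here $\partial F/\partial u=2u$ has degree $1$, $\partial F/\partial v=3v^2$ has degree $2$, and $\partial F/\partial w=6w^5$ has degree $5$.

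\textbf{Degree two.} On the left only $a_1\cdot 2u$ contributes; on the right only $H_0u^2$ does. Hence $2ua_1=H_0u^2$, so $a_1=\tfrac{H_0}{2}u$. Write $H_0=6\lambda$, which gives $a_1=3\lambda u$.

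\textbf{Degree three.} The left side contributes $2u\,a_2+3v^2b_1$, where $a_2$ is the quadratic part of $a$. The right side contributes $H_0v^3+H_1u^2$. Setting $u=0$ gives $3v^2\,b_1(0,v,w)=6\lambda v^3$. Since $b_1$ is linear, $b_1(0,v,w)=2\lambda v$, so $b_1=\mu u+2\lambda v$ for some $\mu$. The other terms only constrain $a_2$.

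\textbf{Degree six, for $c_1$.} This is the main step. The term $6w^5c_1$ is the only degree-six contribution that can produce monomials $w^6$ and $vw^5$ without a $u$ factor, once we reduce modulo $u$. Restrict to $u=0$ and look at degree six. The left side is $3v^2\,b_5(0,v,w)+6w^5c_1(0,v,w)$. The right side is $H_0w^6+(H_3v^3)|_{u=0}$. Now examine the monomials $w^6$ and $vw^5$, which are not divisible by $v^2$. The $v^2b_5$ and $H_3v^3$ terms cannot produce them. So $6w^5c_1(0,v,w)\equiv 6\lambda w^6$ modulo $v^2$. Writing $c_1=\nu u+\beta v+\gamma w$, this gives $\gamma=\lambda$ and $\beta=0$, hence $c_1=\nu u+\lambda w$.

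The only subtlety is this last bookkeeping: the contributions from $b$ and $H$ at intermediate degrees are all divisible by $v^2$ or $u$, so they do not interfere with the coefficient extraction.
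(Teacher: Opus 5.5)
Your proof is correct and is essentially the paper's argument: you compare ordinary-degree components of $\theta(F)=HF$ in degrees two, three and six, working modulo $u$ and then modulo $v^2$. The only differences are that you remove the constant terms by citing Lemma~\ref{lem:vanishing-at-origin}, whereas the paper reads $a_0=b_0=c_0=0$ directly off the degree-one, degree-two and degree-five comparisons, and a harmless index slip: the degree-six contribution of the $\partial_v$-component is $3v^2b_4$, not $3v^2b_5$, which does not matter because you only use its divisibility by $v^2$.
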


\begin{proof}
Write the Taylor expansions by ordinary degree:
\[
a=a_0+a_1+a_{\ge2},\qquad
b=b_0+b_1+b_{\ge2},\qquad
c=c_0+c_1+c_{\ge2},
\]
where $a_0,b_0,c_0$ are constants, $a_1,b_1,c_1$ are linear forms, and the remaining terms have ordinary degree at least two. Write
\[
H=h_0+H_{\ge1}.
\]
The logarithmic equation is
\[
2ua+3v^2b+6w^5c=H(u^2+v^3+w^6).
\]

The degree-one part is $2ua_0$, while the right-hand side has no degree-one part. Thus $a_0=0$. The degree-two part is $2ua_1+3b_0v^2$, while the right-hand side has degree-two part $h_0u^2$. The coefficient of $v^2$ gives $b_0=0$, and then $2ua_1=h_0u^2$, so $a_1=(h_0/2)u$. The coefficient of $w^5$ in ordinary degree five is $6c_0$ on the left and $0$ on the right, because every monomial of $H(u^2+v^3+w^6)$ is divisible by $u^2$, by $v^3$, or by $w^6$. Hence $c_0=0$.

Now compare ordinary degree-three terms with no factor $u$. The term $2ua_{\ge2}$ has a factor $u$, and the contribution $H_{\ge1}u^2$ on the right has a factor $u^2$. Thus the terms with no factor $u$ in degree three come only from $3v^2b_1$ on the left and $h_0v^3$ on the right. Write
\[
b_1=B_u u+B_v v+B_w w.
\]
This comparison gives
\[
3B_vv^3+3B_wv^2w=h_0v^3.
\]
Therefore $B_v=h_0/3$ and $B_w=0$. The coefficient $B_u$ is unrestricted by this comparison.

Finally compare ordinary degree-six monomials with no factor $u$ and with $v$-degree zero or one. The term $2ua$ has a factor $u$, and $3v^2b$ has a factor $v^2$. On the right, the monomial $w^6$ can be contributed only by $h_0w^6$, and the monomial $vw^5$ cannot occur because every summand is divisible by $u^2$, by $v^3$, or by $w^6$. Write
\[
c_1=C_u u+C_v v+C_w w.
\]
The relevant part of $6w^5c_1$ is
\[
6C_vvw^5+6C_ww^6.
\]
Hence $C_v=0$ and $C_w=h_0/6$. The coefficient $C_u$ is unrestricted by this comparison.

Set $\lambda=h_0/6$, $\mu=B_u$, and $\nu=C_u$. Then
\[
a_1=3\lambda u,\qquad
b_1=\mu u+2\lambda v,\qquad
c_1=\nu u+\lambda w,
\]
as claimed.
\end{proof}

\begin{prop}[Fixed-coordinate failure]\label{prop:fixed-coordinate-failure}
Let
\[
f=x_1^2+(x_2-3x_3)^3+x_3^6\in\C\{x_1,x_2,x_3\},
\]
and let $D=\{f=0\}$. Then $D$ is a reduced isolated hypersurface singularity and is weighted homogeneous after the linear coordinate change
\[
u=x_1,\qquad v=x_2-3x_3,\qquad w=x_3.
\]
However, in the displayed $x$-coordinates, there is no logarithmic vector field $V$ satisfying
\[
2\operatorname{Re}\left(\sum_{i=1}^{3}\nu_i(P)\overline{P_i}\right)\neq0
\]
on all sufficiently small standard round links of $D$.
\end{prop}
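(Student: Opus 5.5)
The plan is to argue by contradiction, combining Lemma~\ref{lem:contracting-flow} with Lemma~\ref{lem:linear-part-brieskorn}. The preliminary assertions are routine. In the coordinates $u=x_1$, $v=x_2-3x_3$, $w=x_3$ we have $f=u^2+v^3+w^6$, which is weighted homogeneous with weights $(3,2,1)$ and degree $6$. Its gradient $(2u,3v^2,6w^5)$ vanishes only at $0$, so the singularity is isolated. Reducedness follows from isolatedness, as in the proof of Lemma~\ref{lem:c-to-a}.

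Now suppose $V$ is logarithmic and $R_V\neq0$ on all small round links in the $x$-coordinates. Since $n=2$, Proposition~\ref{prop:connectedness} gives connectedness of the punctured germ. Lemma~\ref{lem:contracting-flow} then shows that, after replacing $V$ by $-V$, we have $R_V>0$ near $0$ and every eigenvalue of $A=DV(0)$ has positive real part. The change of coordinates is linear, so $A$ is conjugate to the linear part of $V$ written in $(u,v,w)$. By Lemma~\ref{lem:linear-part-brieskorn}, that linear part is $(3\lambda u,\ \mu u+2\lambda v,\ \nu u+\lambda w)$, with eigenvalues $3\lambda,2\lambda,\lambda$. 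Hence $\operatorname{Re}\lambda>0$.

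The key step is to evaluate the leading term of $R_V$ along a well-chosen family of points of $D$ and obtain a negative sign. On $D$ we have $u^2=-(v^3+w^6)$, so $|u|=O(|P|^{3/2})$. Consequently
\[
R_V(P)=2\operatorname{Re}\langle AP,P\rangle+O(|P|^3),
\]
and moreover the contributions involving $u$ are $o(|P|^2)$. Thus the relevant quantity is the quadratic form restricted to the plane $u=0$. There the linear field has $x$-components
\[
\bigl(0,\ 2\lambda v+3\lambda w,\ \lambda w\bigr),
\]
while the point has $x$-coordinates $(0,\ v+3w,\ w)$. Pairing the two gives
\[
\lambda\bigl[(2v+3w)\overline{(v+3w)}+|w|^2\bigr].
\]
For real $s$, set $v=sw$. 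The bracket becomes $|w|^2\,q(s)$ with $q(s)=(2s+3)(s+3)+1$, and $q(-11/5)=-3/25<0$. For each small $w=t\neq0$, choose a solution $u$ of $u^2=-(s^3t^3+t^6)$, and take $P_t=(u,\,st,\,t)$ in $(u,v,w)$-coordinates. These points lie on $D\setminus\{0\}$, are smooth points, and tend to $0$. Along them,
\[
R_V(P_t)=2\operatorname{Re}(\lambda)\,q(s)\,|t|^2+o(|t|^2),
\]
which is negative for small $t$ because $\operatorname{Re}\lambda>0$. This contradicts $R_V>0$.

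The main obstacle is the error bookkeeping in that last estimate. One must check carefully that the $\mu u$ and $\nu u$ terms, the $u$-coordinate of $P$, and the quadratic and higher parts of $V$ all contribute $o(|t|^2)$. The first group is $O(|t|^{3/2})\cdot O(|t|)$ and the higher parts are $O(|t|^3)$, so this should work. One also has to handle the sign normalization: it must be applied consistently, so that $\operatorname{Re}\lambda>0$ holds exactly when $R_V>0$.
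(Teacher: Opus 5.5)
Your proposal is correct and follows the paper's proof essentially step for step. It uses the same contradiction via Lemma~\ref{lem:contracting-flow} and Lemma~\ref{lem:linear-part-brieskorn}, the same curve families of the form $(u_t,tp,tq)$ on $D$, and the same quadratic form $(2p+3q)(p+3q)+q^2$; your value $q(-11/5)=-3/25$ plays the role of the paper's $\beta(-9/4,1)=-1/8$. The only difference is that you use a single negative family against the normalization $R_V>0$, which is valid because the proof of Lemma~\ref{lem:contracting-flow} normalizes exactly this way. The paper instead pairs it with a positive family $(p,q)=(1,0)$, so its contradiction with sign-constancy only needs $\operatorname{Re}\lambda\neq0$ and does not depend on that sign bookkeeping.
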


\begin{proof}
In the coordinates
\[
u=x_1,\qquad v=x_2-3x_3,\qquad w=x_3,
\]
the defining equation becomes
\[
F(u,v,w)=u^2+v^3+w^6.
\]
It is weighted homogeneous of weighted degree $6$ for weights
\[
\operatorname{wt}(u)=3,\qquad \operatorname{wt}(v)=2,\qquad \operatorname{wt}(w)=1.
\]
Its partial derivatives are $2u$, $3v^2$, and $6w^5$, whose common zero is only the origin. Hence $D$ has an isolated hypersurface singularity at $0$. The same derivative calculation shows that $F$, and therefore $f$, is reduced: a repeated factor would divide $F$ and all of its partial derivatives, but those partial derivatives have no common nonunit factor. Also $f\in\mathfrak m^2$.

Assume, for contradiction, that there exists a holomorphic vector field germ
\[
V=\sum_{i=1}^{3}\nu_i\frac{\partial}{\partial x_i}
\]
such that $V(f)=hf$ and such that the fixed-coordinate real-Euclidean radial component $R_V$ is nonzero on all sufficiently small links. Since $\dim_{\C}D=2$, Proposition~\ref{prop:connectedness} gives that $D\setminus\{0\}$ is connected after shrinking. Thus $R_V$ has constant sign there. Replacing $V$ by $-V$ if necessary, assume $R_V>0$ on the punctured representative. Lemma~\ref{lem:contracting-flow}, applied in the $x$-coordinates, shows that every eigenvalue of the linear part $A=DV(0)$ has positive real part.

Let
\[
L(u,v,w)=(u,v+3w,w)
\]
be the linear map from the weighted coordinates to the original $x$-coordinates. Pull $V$ back by $L$ to a logarithmic vector field
\[
\theta=a\frac{\partial}{\partial u}
+b\frac{\partial}{\partial v}
+c\frac{\partial}{\partial w}
\]
for $F$. Linear coordinate change conjugates the linear parts of $V$ and $\theta$, so they have the same eigenvalues. By Lemma~\ref{lem:linear-part-brieskorn}, the linear part $B=D\theta(0)$ has the form
\[
B(u,v,w)=(3\lambda u,\mu u+2\lambda v,\nu u+\lambda w)
\]
for some complex numbers $\lambda,\mu,\nu$. Therefore the eigenvalues of $A$ are $3\lambda$, $2\lambda$, and $\lambda$. Since every eigenvalue of $A$ has positive real part, we have $\operatorname{Re}\lambda>0$.

Let $p,q\in\mathbb R$ not both zero. For small positive real $t$, choose $u_t$ satisfying
\[
u_t^2=-t^3p^3-t^6q^6
\]
and set
\[
y_t=(u_t,tp,tq).
\]
Then $F(y_t)=0$. For the two choices of $(p,q)$ used below, $p\neq0$, so $y_t$ is a smooth point of $\{F=0\}$ for all sufficiently small $t$. Moreover $u_t=O(t^{3/2})$. Put
\[
x_t=L(y_t).
\]

We compute the leading term of $R_V(x_t)$. The nonlinear terms of $\theta$ are $O(t^2)$, and after applying $L$ and pairing with $x_t=O(t)$ they contribute $O(t^3)$. The $\mu u$ and $\nu u$ terms of the linear part contribute $O(t^{3/2})$ to the vector component and hence $O(t^{5/2})$ to the radial pairing. The order-$t^2$ term therefore comes only from the diagonal $(2\lambda v,\lambda w)$ part. Since
\[
L(0,2\lambda tp,\lambda tq)=(0,\lambda t(2p+3q),\lambda tq)
\]
and
\[
L(0,tp,tq)=(0,t(p+3q),tq),
\]
we obtain
\[
R_V(x_t)=2t^2\operatorname{Re}\bigl(\lambda\,\beta(p,q)\bigr)+O(t^{5/2}),
\]
where
\[
\beta(p,q)=(2p+3q)(p+3q)+q^2.
\]
For $(p,q)=(1,0)$, we have $\beta(p,q)=2$. For $(p,q)=(-9/4,1)$, we have
\[
\beta(p,q)=-\frac18.
\]
Because $\operatorname{Re}\lambda>0$, the first family has $R_V(x_t)>0$ for all sufficiently small $t$, while the second family has $R_V(x_t)<0$ for all sufficiently small $t$. This contradicts the sign-constancy already obtained from connectedness.

Therefore no logarithmic vector field in the displayed $x$-coordinates satisfies fixed-coordinate real-Euclidean transversality on all small standard round links. Since the same germ is weighted homogeneous in the $(u,v,w)$-coordinates, transversality in Theorem~\ref{thm:main}(2) is genuinely coordinate-dependent, and the coordinate-matched formulation is necessary.
\end{proof}

\section{Final remark}\label{sec:final-remark}

Since multiplying a weighted Euler field by $i$ preserves the nonvanishing of $\alpha_v$ but makes $R_v$ identically zero, the complex-hyperplane condition $\alpha_v(P)\neq0$ is strictly weaker than real-Euclidean transversality. In fact the analogous characterization fails, as the following shows.

\begin{prop}\label{prop:complex-hyperplane-failure}
Let $D$ be the irreducible plane branch parametrized by $(x,y)=(t^4,t^6+t^7)$. Then $D$ is not weighted homogeneous, yet it admits a logarithmic vector field $v\in\operatorname{Der}(-\log D)$ with $\alpha_v(P)\neq0$ on every sufficiently small link.
\end{prop}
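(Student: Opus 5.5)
The plan has two independent parts: show that the branch $D$ is not weighted homogeneous, and exhibit a logarithmic field with $\alpha_v\neq0$ on the links. For the second part I will work on the normalization $\gamma(t)=(t^4,t^6+t^7)$. Every point of a small link is $\gamma(t)$ for a unique small $t\neq0$. By Proposition~\ref{prop:connectedness}, $D\setminus\{0\}$ is the smooth punctured image of the disc, and $\gamma'(t)\neq0$ there.

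\emph{Not weighted homogeneous.} The Puiseux expansion is $y=x^{6/4}+x^{7/4}$. Its characteristic exponents are $(4;6,7)$, so the branch has two Puiseux pairs and semigroup $\langle 4,6,13\rangle$. An irreducible weighted homogeneous plane curve germ has a defining equation, in suitable coordinates, analytically equivalent to $x^a-y^b$ with $\gcd(a,b)=1$. Such a germ has a single characteristic pair, i.e.\ a semigroup with two generators. Both the characteristic exponents and the semigroup are topological (equisingularity) invariants and are unchanged by analytic coordinate changes, so $D$ is not weighted homogeneous.

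As a cross-check, I could instead compare the Milnor and Tjurina numbers. Here $\mu=2\delta=16$ from the conductor of the semigroup, and Saito's criterion \cite{Sai71} would require $\tau=\mu$. However, the topological argument is cleaner, and I would use it.

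\emph{The vector field.} Let $f$ be a reduced equation of $D$. Take the Hamiltonian field $v=f_y\partial_x-f_x\partial_y$. It satisfies $v(f)=0$, so $v\in\operatorname{Der}(-\log D)$. Since $0$ is an isolated critical point of $f$, $v$ vanishes only at $0$.

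At a smooth point $\gamma(t)$, $v$ is tangent to the one-dimensional complex tangent line, which is spanned by $\gamma'(t)$. Hence $v(\gamma(t))=c(t)\gamma'(t)$ with $c(t)\in\C^{*}$ for all small $t\neq0$. Then
\[
\alpha_v(\gamma(t))=c(t)\bigl(\gamma_1'(t)\overline{\gamma_1(t)}+\gamma_2'(t)\overline{\gamma_2(t)}\bigr)
=c(t)\bigl(4t^3\bar t^4+(6t^5+7t^6)(\bar t^6+\bar t^7)\bigr).
\]
The bracket equals $4t^3\bar t^4\bigl(1+O(|t|^4)\bigr)$, which is nonzero for small $t\neq0$. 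Hence $\alpha_v\neq0$ on every sufficiently small link.

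The main point needing care is the identification of the link with $\gamma$ of a small punctured disc. Near $0$, $\|\gamma(t)\|\sim|t|^4$, so a small sphere meets $D$ only at points $\gamma(t)$ with $|t|$ small. This is routine.
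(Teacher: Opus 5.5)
Your proof is correct. For the failure of weighted homogeneity it follows the paper's idea. For the vector field it takes a genuinely different route.

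\textbf{Weighted homogeneity.} The paper checks by hand that a semigroup containing $4,6,13$ with least element $4$ cannot be two-generated. You instead quote the Puiseux characteristic $(4;6,7)$ and its topological invariance. Both arguments rest on the same standard normal form $Y^p-\lambda X^q$ for weighted homogeneous branches.

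\textbf{The vector field.} The paper uses the conductor to see that $\delta=t^k\,d/dt$ (with $k\ge c+1$) is a derivation of $\OO_{D,0}$. It lifts $\delta$ to an ambient logarithmic field and obtains the explicit identity $v(\gamma(t))=t^k\gamma'(t)$.

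You take the Hamiltonian field $f_y\partial_x-f_x\partial_y$, which is logarithmic with no work. At a smooth point it is a nonzero vector of $\ker df=\C\gamma'(t)$, because the critical point is isolated. Hence $v(\gamma(t))=c(t)\gamma'(t)$ with $c(t)\neq0$.

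\textbf{What each route buys.} Your route avoids conductors and the lifting of derivations from $\OO_{D,0}$ to the ambient space. It also makes the mechanism transparent. The bracket $\sum_i\gamma_i'(t)\overline{\gamma_i(t)}$ equals $\partial_t\|\gamma(t)\|^2$. Its nonvanishing for small $t\neq0$ is just transversality of the branch to small spheres. So any logarithmic field without zeros on $D\setminus\{0\}$ satisfies $\alpha_v\neq0$ on small links of any reduced plane curve germ. The specific branch matters only for the failure of weighted homogeneity.

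The paper's route, in exchange, gives a fully explicit field and formula. It needs no appeal to $\ker df$ or to isolatedness of the critical point.

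The two fields are close relatives. By Dedekind's formula, your $c(t)=f_y(\gamma(t))/x'(t)$ vanishes at $t=0$ to order exactly the conductor of the semigroup. So on the normalization your field is a unit multiple of $t^{m}\,d/dt$, with $m$ the conductor, one below the exponents the paper allows.
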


\begin{proof}
The map $t\mapsto(t^4,t^6+t^7)$ is the normalization of $D$, so $D$ is irreducible and reduced. Its value semigroup $S=\operatorname{ord}_t\OO_{D,0}\subseteq\Z_{\ge0}$ contains $4=\operatorname{ord}_t x$ and $6=\operatorname{ord}_t y$, and also $13$, because
\[
y^2-x^3=(t^6+t^7)^2-t^{12}=2t^{13}+t^{14}
\]
has $t$-order $13$. A weighted homogeneous irreducible plane branch is, in suitable coordinates, defined by a polynomial $Y^p-\lambda X^q$ with $\gcd(p,q)=1$, so its value semigroup is generated by the two coprime numbers $p$ and $q$. The semigroup $S$ is not two-generated: its least positive element is the multiplicity $4$, so any pair of generators would contain $4$; the second generator is at most $6$, but it is neither $5$, since $6\notin\langle4,5\rangle$, nor $6$, since $\langle4,6\rangle$ consists of even numbers and so cannot contain $13$. Hence $S$ is not two-generated, and $D$ is not weighted homogeneous; equivalently, $D$ is not quasi-homogeneous in the sense of Saito \cite{Sai71}.

We now construct the field. Embed $\OO_{D,0}\subset\C\{t\}$ through the normalization, and let $c$ be the conductor of $S$, so that $t^c\C\{t\}\subseteq\OO_{D,0}$. Fix an integer $k\ge c+1$. For $g\in\OO_{D,0}$ the derivative $g'$ lowers $t$-order by one, so $t^kg'\in t^c\C\{t\}\subseteq\OO_{D,0}$; thus $\delta:=t^k\,d/dt$ is a $\C$-derivation of $\OO_{D,0}$. Let $f\in\C\{x,y\}$ be a reduced equation of $D$, and choose ambient holomorphic lifts $a,b\in\C\{x,y\}$ of $\delta(\bar x),\delta(\bar y)$. The field $v=a\,\partial_x+b\,\partial_y$ reduces modulo $(f)$ to $\delta$; since $\delta$ annihilates the class of $f$, we obtain $v(f)\in(f)$, that is $v\in\operatorname{Der}(-\log D)$. On the normalization,
\[
\alpha_v\bigl((t^4,t^6+t^7)\bigr)=t^k\bigl(x'(t)\overline{x(t)}+y'(t)\overline{y(t)}\bigr)
=t^k\bigl(4t^3\overline{t}^{\,4}+(6t^5+7t^6)(\overline{t}^{\,6}+\overline{t}^{\,7})\bigr),
\]
whose leading term $4t^{k+3}\overline{t}^{\,4}$ has modulus $4|t|^{k+7}$. Since the normalization is an isomorphism over $D\setminus\{0\}$ after shrinking, $\alpha_v(P)\neq0$ for every $P\in D_{\mathrm{sm}}$ on every sufficiently small link.
\end{proof}

Hence the implication~$(2)\Rightarrow(1)$ of Theorem~\ref{thm:main} does not hold when $R_v(P)\neq0$ is replaced by $\alpha_v(P)\neq0$.


\end{document}